\title{Nakai--Moishezon ampleness criterion for real line bundles}
\author{Osamu Fujino and Keisuke Miyamoto}
\date{2020/11/29, version 0.09}
\subjclass[2010]{Primary 14C20; Secondary 14E30}
\keywords{Nakai--Moishezon ampleness criterion, 
Kleiman's ampleness criterion, $\mathbb R$-line bundles, 
$\mathbb R$-Cartier divisors, 
augmented base loci, 
projectivity, algebraic spaces}
\address{Osamu Fujino \\ Department 
of Mathematics, Graduate School of Science, 
Osaka University, Toyonaka, Osaka 560-0043, Japan}
\email{fujino@math.sci.osaka-u.ac.jp}
\address{Keisuke Miyamoto \\ Department of Mathematics, 
Graduate School of Science, 
Osaka University, Toyonaka, Osaka 560-0043, Japan}
\email{u901548b@ecs.osaka-u.ac.jp}
\DeclareMathOperator{\Supp}{Supp}
\DeclareMathOperator{\Bs}{Bs}
\DeclareMathOperator{\Exc}{Exc}
\DeclareMathOperator{\Pic}{Pic}
\DeclareMathOperator{\Div}{Div}
\newtheorem{thm}{Theorem}[section]
\newtheorem{lem}[thm]{Lemma}
\theoremstyle{definition}
\newtheorem{step}{Step}
\newtheorem{defn}[thm]{Definition}
\newtheorem*{ack}{Acknowledgments}  
\begin{document}

\maketitle 

\begin{abstract} 
We show that the Nakai--Moishezon ampleness criterion holds for 
real line bundles on complete schemes. 
As applications, we treat the relative Nakai--Moishezon ampleness 
criterion for real line bundles and the Nakai--Moishezon ampleness 
criterion for real line bundles on complete algebraic spaces. 
The main ingredient of this paper is Birkar's characterization of 
augmented base loci of real divisors on projective schemes. 
\end{abstract}

\tableofcontents

\section{Introduction}\label{f-sec1}

Throughout this paper, a {\em{scheme}} means a separated scheme of 
finite type over an algebraically closed field $k$ of any 
characteristic. 
We call such a scheme a {\em{variety}} 
if it is reduced and irreducible. Let us start with 
the definition of $\mathbb R$-line bundles. 

\begin{defn}[$\mathbb R$-line bundles]\label{f-def1.1}
Let $X$ be a scheme (or an algebraic space). 
An {\em{$\mathbb R$-line bundle}} 
(resp.~a {\em{$\mathbb Q$-line bundle}}) is an element of 
$\Pic(X)\otimes _{\mathbb Z}\mathbb R$ 
(resp.~$\Pic(X)\otimes _{\mathbb Z} \mathbb Q$) 
where $\Pic(X)$ is the Picard group of $X$. 
\end{defn}

Similarly, we can define $\mathbb R$-Cartier divisors. 

\begin{defn}[$\mathbb R$-Cartier divisors]\label{f-def1.2}
Let $X$ be a scheme. An {\em{$\mathbb R$-Cartier divisor}} 
(resp.~a {\em{$\mathbb Q$-Cartier divisor}}) is an element of 
$\Div(X)\otimes _{\mathbb Z}\mathbb R$ 
(resp.~$\Div(X)\otimes _{\mathbb Z} \mathbb Q$) 
where $\Div(X)$ denotes the group of Cartier divisors on $X$. 
\end{defn}

We prove the Nakai--Moishezon ampleness criterion for 
$\mathbb R$-line bundles. The following 
theorem is the main result of this paper. 

\begin{thm}[Nakai--Moishezon ampleness criterion for 
real line bundles on complete schemes]\label{f-thm1.3}
Let $X$ be a complete scheme over an algebraically closed field and let 
$\mathcal L$ be an $\mathbb R$-line bundle 
on $X$. Then $\mathcal L$ is ample if and only if 
$\mathcal L^{\dim Z}\cdot Z>0$ for every 
positive-dimensional closed integral subscheme $Z \subset X$. 
\end{thm}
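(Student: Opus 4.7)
The $(\Rightarrow)$ direction is formal. If $\mathcal L$ is ample, write it as a positive $\mathbb R$-linear combination of ample integral line bundles and expand $\mathcal L^{\dim Z}\cdot Z$ multinomially; the classical Nakai--Moishezon criterion for integral line bundles makes each summand nonnegative, and at least one term (a pure power of a single ample line bundle restricted to $Z$) is strictly positive, yielding $\mathcal L^{\dim Z}\cdot Z>0$.

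For $(\Leftarrow)$, the plan is induction on $\dim X$, after reducing to the case where $X$ is reduced and irreducible by treating irreducible components separately and noting that ampleness and the Nakai condition are insensitive to nilpotents. The case $\dim X=1$ amounts to a complete curve with an $\mathbb R$-line bundle of positive degree, which is ample because on a curve ampleness depends only on the numerical class and every positive real degree is realised by an ample $\mathbb Q$-line bundle. In the inductive step, the induction hypothesis gives that $\mathcal L|_V$ is ample, hence nef and big, on every proper closed integral $V\subsetneq X$; in particular $\mathcal L$ itself is nef on $X$.

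The main case is when $X$ is projective, where Birkar's theorem enters. It states that on a projective scheme the augmented base locus $\mathbf B_+(\mathcal L)$ is the union of the closed integral subschemes along which $\mathcal L$ restricts to a non-big class. By the induction hypothesis no proper subscheme contributes, so set-theoretically $\mathbf B_+(\mathcal L)\subseteq\{X\}$. Since $\mathcal L$ is nef and $\mathcal L^{\dim X}\cdot X>0$, and the volume of a nef class equals its top self-intersection, $\mathcal L$ is big on $X$; thus $X$ is excluded from $\mathbf B_+(\mathcal L)$. Hence $\mathbf B_+(\mathcal L)=\emptyset$, which is equivalent to ampleness of $\mathcal L$.

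The remaining and most delicate point is reducing the complete case to the projective one. Chow's lemma provides a projective birational morphism $\pi\colon X'\to X$ with $X'$ projective; by the projection formula $\pi^{*}\mathcal L$ inherits Nakai positivity on every $V'\subseteq X'$ not contracted by $\pi$, but the condition degenerates on $\pi$-exceptional subvarieties. The natural remedy is to work with $\pi^{*}\mathcal L+\epsilon H$ for $H$ ample on $X'$ and sufficiently small $\epsilon>0$, treating exceptional $V'$ by expanding the intersection polynomial in $\epsilon$ and using the induction hypothesis applied to $\pi(V')$ to control the lower-order coefficients. Applying the projective case of the theorem to $\pi^{*}\mathcal L+\epsilon H$ gives ampleness on $X'$; descending this to deduce projectivity of $X$ and then ampleness of $\mathcal L$ itself is where I expect the principal technical effort to concentrate. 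Securing a \emph{uniform} choice of $\epsilon$ across the possibly intricate family of $\pi$-exceptional subvarieties, together with arranging the descent from $X'$ back to $X$, is the main obstacle in the argument.
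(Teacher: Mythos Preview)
Your treatment of the projective case is essentially the paper's: Birkar's identification $\mathbf B_+(L)=\mathbb E(L)$, combined with the Nakai hypothesis, forces $\mathbf B_+(L)=\emptyset$, hence $L$ is ample. The paper does not set this up as an induction on $\dim X$, but the content is the same.

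The genuine gap is in the non-projective step, and it is not the one you flagged. Your worry about a uniform $\epsilon$ is a red herring: once you know $\pi^{*}\mathcal L$ is nef and $H$ is ample, every term in the expansion of $(\pi^{*}\mathcal L+\epsilon H)^{\dim V'}\cdot V'$ is nonnegative and the $\epsilon^{\dim V'}H^{\dim V'}\cdot V'$ term is strictly positive, so Nakai holds on \emph{all} of $X'$ for \emph{every} $\epsilon>0$. Thus $\pi^{*}\mathcal L+\epsilon H$ is ample for all $\epsilon>0$; but this only says $\pi^{*}\mathcal L$ is nef, which you already knew. The descent you propose has no mechanism: ampleness of $\pi^{*}\mathcal L+\epsilon H$ on $X'$ does not produce a projective embedding of $X$, and $\pi^{*}\mathcal L$ itself is never ample (it is trivial on $\pi$-fibres).

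The paper's way around this is to use not one Chow cover but \emph{several} projective models $\pi_i\colon X_i\to X$, one for each member $U_i$ of a finite affine cover, arranged so that $\pi_i$ is an isomorphism over $U_i$. Birkar applied on each $X_i$ gives $\mathbf B_+(\pi_i^{*}L)=\mathrm{Exc}(\pi_i)$, hence for every rational perturbation $M=\sum m_jL_j$ sufficiently close to $L$ one gets $\mathbf B(M)\subset\pi_i(\mathrm{Exc}(\pi_i))\subset X\setminus U_i$ for each $i$. Intersecting over $i$ yields $\mathbf B(M)=\emptyset$, so each such $M$ is semi-ample \emph{on $X$}. Writing $L$ as a positive real combination of such $M$'s shows $L$ is semi-ample; the associated morphism $f\colon X\to Y$ to a projective $Y$ satisfies $L\sim_{\mathbb R}f^{*}A$ with $A$ ample, and $L\cdot C>0$ on every curve forces $f$ to be an isomorphism. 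This is what simultaneously proves projectivity of $X$ and ampleness of $L$, and it is the idea your single-model approach is missing.
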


When $X$ is projective, Theorem \ref{f-thm1.3} 
is well known. 
It was first proved by Campana and 
Peternell (see \cite[1.3.~Theorem]{campana-peternell}). 
Then a somewhat simpler proof was given by Lazarsfeld 
in \cite[Theorem 2.3.18]{lazarsfeld}. 
Unfortunately, their arguments do not work for 
complete nonprojective schemes because they 
need an ample line bundle. Moreover, Kleiman's ampleness 
criterion does not always hold for complete nonprojective 
schemes (see 
\cite[Section 3]{fujino1} and 
\cite[Example 12.1]{fujino3}). 
Hence we need some new idea to prove Theorem \ref{f-thm1.3}. 
By the standard reduction argument, it is sufficient to 
treat the case where $X$ is a complete normal variety. 
Therefore, all we have to do is to establish the 
following theorem. 

\begin{thm}[Nakai--Moishezon ampleness criterion for 
real Cartier divisors on complete normal varieties]\label{f-thm1.4}
Let $X$ be a complete normal variety over an algebraically closed 
field and let $L$ be an $\mathbb R$-Cartier divisor 
on $X$. Then $L$ is ample if and only if 
$L^{\dim Z}\cdot Z>0$ for every positive-dimensional closed 
subvariety $Z\subset X$. 
\end{thm}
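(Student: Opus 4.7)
The plan is to induct on $\dim X$ and pass via Chow's lemma to a projective birational modification, where Birkar's characterization of augmented base loci provides the essential input. The ``only if'' direction is the standard Nakai inequality for ample classes, so I focus on the ``if'' direction. Set $n=\dim X$. The case $n=1$ reduces to the degree criterion on a complete normal curve, so assume $n\ge 2$ and that the theorem holds in dimensions less than $n$. Applying the inductive hypothesis to the normalization of any proper closed subvariety $Z\subsetneq X$ (and using that ampleness descends under finite surjective morphisms) shows that $L|_Z$ is ample for every such $Z$; in particular, $L\cdot C>0$ for every curve $C\subset X$, so $L$ is nef on $X$.

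Next, invoke Chow's lemma to obtain a proper birational morphism $\pi\colon Y\to X$ with $Y$ projective and, after normalizing, normal. The projection formula together with nefness of $L$ gives that $\pi^{*}L$ is nef on $Y$. Since $Y$ is projective, Birkar's theorem identifies the augmented base locus $B_{+}(\pi^{*}L)$ with the union of those positive-dimensional closed subvarieties $V\subset Y$ for which $\pi^{*}L|_{V}$ fails to be big; for nef $\pi^{*}L$ this is equivalent to $(\pi^{*}L)^{\dim V}\cdot V=0$. The geometric heart of the argument is then to prove $B_{+}(\pi^{*}L)\subseteq \Exc(\pi)$: if $V\not\subseteq \Exc(\pi)$ then $\pi|_{V}$ is generically finite onto $\pi(V)$, and either $V=Y$ (so $(\pi^{*}L)^{n}\cdot Y=L^{n}\cdot X>0$ by hypothesis) or $V\subsetneq Y$ and the projection formula combined with the inductive ampleness of $L|_{\pi(V)}$ gives $(\pi^{*}L)^{\dim V}\cdot V=\deg(\pi|_{V})\cdot L^{\dim V}\cdot \pi(V)>0$; in either case $V\notin B_{+}(\pi^{*}L)$.

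The main obstacle is the final descent, turning $B_{+}(\pi^{*}L)\subseteq \Exc(\pi)$ into actual ampleness of $L$ on the (a priori nonprojective) base $X$. My plan is a perturbation argument using upper semicontinuity of $B_{+}$ on the N\'eron--Severi space of $Y$: every $\mathbb R$-Cartier divisor $L'$ on $X$ sufficiently close to $L$ satisfies $B_{+}(\pi^{*}L')\subseteq B_{+}(\pi^{*}L)\subseteq \Exc(\pi)$. Rerunning the projection-formula computation for $L'$ (again via Birkar) shows that $L'$ itself satisfies the Nakai hypothesis on $X$. Choosing such an $L'$ to be $\mathbb Q$-Cartier and clearing denominators, Kleiman's classical Nakai--Moishezon criterion for integer line bundles on complete schemes forces $L'$ to be ample, so $X$ must be projective. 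Once $X$ is known to be projective, the $\mathbb R$-Nakai--Moishezon criterion of Campana--Peternell and Lazarsfeld applies directly to the original $L$, yielding its ampleness.
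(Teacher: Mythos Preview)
Your route differs from the paper's: the paper takes a projective modification $\pi_i\colon X_i\to X$ for \emph{each} affine chart $U_i$ (arranged so that $\pi_i(\Exc(\pi_i))\subset X\setminus U_i$), uses Birkar on each $X_i$ to get $\mathbf B(\pi_i^*L')\subset\Exc(\pi_i)$ for nearby $\mathbb Q$-divisors $L'$, pushes down and intersects over $i$ to obtain $\mathbf B(L')=\emptyset$, and concludes that $L$ is semi-ample, hence ample. It never needs to verify the Nakai inequalities for the perturbation $L'$. You instead use a single Chow modification and try to feed $L'$ into the classical integral Nakai--Moishezon criterion on complete schemes.

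That strategy can be made to work, but the sentence ``Rerunning the projection-formula computation for $L'$ (again via Birkar) shows that $L'$ itself satisfies the Nakai hypothesis on $X$'' is, as written, circular: your earlier projection-formula step took the Nakai positivity of $L$ on $\pi(V)$ as \emph{input}, and you do not have that for $L'$; moreover Birkar's theorem for $\pi^*L'$ requires nefness, which you have not established. The missing step is this. From $\mathbf B_+(\pi^*L')\subseteq\Exc(\pi)$ one gets, with no nefness assumption, that $\pi^*L'|_V$ is big for every subvariety $V\subset Y$ not contained in $\Exc(\pi)$ (write $\pi^*L'|_V$ as ample plus effective). Applied to strict transforms of curves this gives $L'\cdot C>0$ for every curve $C\subset X$, so $L'$ is nef. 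Then for arbitrary $Z\subset X$ with strict transform $\tilde Z\subset Y$, the class $\pi^*L'|_{\tilde Z}$ is nef and big on the projective variety $\tilde Z$, hence $(\pi^*L')^{\dim Z}\cdot\tilde Z>0$ by Lemma~\ref{f-lem2.3}, and the projection formula yields $(L')^{\dim Z}\cdot Z>0$. With this inserted, your descent goes through; without it the ``rerunning'' step does not stand on its own.
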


For the proof of Theorem \ref{f-thm1.4}, we use 
Birkar's characterization of augmented base loci of 
$\mathbb R$-divisors on projective 
schemes (see Theorem \ref{f-thm3.4}). 
Hence our approach is different from 
those of \cite{campana-peternell} and 
\cite{lazarsfeld}. Although we can not directly apply 
geometric arguments to $\mathbb R$-line bundles, 
we can generalize Theorem \ref{f-thm1.3} for 
proper morphisms. 

\begin{thm}[Relative Nakai--Moishezon ampleness criterion for 
real line bundles]\label{f-thm1.5}
Let $\pi\colon X\to S$ be a proper morphism between 
schemes and 
let $\mathcal L$ be an $\mathbb R$-line bundle 
on $X$. 
Then $\mathcal L$ is $\pi$-ample if and only if 
$\mathcal L^{\dim Z}\cdot Z>0$ for every 
positive-dimensional closed integral subscheme 
$Z\subset X$ such that $\pi(Z)$ is a point. 
\end{thm}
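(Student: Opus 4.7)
The ``only if'' direction is immediate. If $\mathcal L$ is $\pi$-ample and $Z\subset X$ is a positive-dimensional closed integral subscheme with $\pi(Z)=\{s\}$, then $Z$ sits inside the complete fiber $X_s=\pi^{-1}(s)$ as a closed integral subscheme, and $\mathcal L|_Z$ is an ample $\mathbb R$-line bundle on $Z$, so $\mathcal L^{\dim Z}\cdot Z=(\mathcal L|_Z)^{\dim Z}>0$ by the standard positivity of the top self-intersection of an ample $\mathbb R$-line bundle on a complete scheme (which is a special case of Theorem \ref{f-thm1.3}).

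For the ``if'' direction, my plan is to combine a fiberwise application of Theorem \ref{f-thm1.3} with the classical descent of fibral ampleness to relative ampleness. Since being $\pi$-ample is local on the base, I would cover $S$ by affine opens and reduce to the case where $S$ is affine. Then for each closed point $s\in S$ the fiber $X_s$ is a complete scheme, and every positive-dimensional closed integral subscheme of $X_s$ is a positive-dimensional closed integral subscheme of $X$ whose image under $\pi$ is a point. Because intersection numbers are preserved under restriction to a fiber, the hypothesis gives $(\mathcal L|_{X_s})^{\dim Z}\cdot Z>0$ for every positive-dimensional closed integral $Z\subset X_s$. Theorem \ref{f-thm1.3} applied on $X_s$ then yields that $\mathcal L|_{X_s}$ is ample; in particular each fiber is projective, so $\pi$ is projective and there exists a $\pi$-ample integer line bundle $H$ on $X$.

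The remaining step is to upgrade ``ample on every fiber'' to ``$\pi$-ample'' for the $\mathbb R$-line bundle $\mathcal L$ itself. For integer line bundles this is the classical relative ampleness criterion. Writing $\mathcal L=\sum_i r_i L_i$ with $L_i\in\Pic(X)$ and $r_i\in\mathbb R$, the strategy is to replace $\mathcal L$ by $\mathcal L-\epsilon H$ for small $\epsilon>0$ (still ample on each fiber, as $\mathcal L|_{X_s}$ lies in the interior of the ample cone of $X_s$), approximate $\mathcal L-\epsilon H$ by a $\mathbb Q$-line bundle $\mathcal L'$ that remains fibral-ample, clear denominators to invoke the integer criterion and conclude $\mathcal L'$ is $\pi$-ample, and finally reassemble $\mathcal L=\mathcal L'+\epsilon H+(\text{small correction})$ as a positive real combination of $\pi$-ample integer line bundles. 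I expect the main obstacle to be precisely this last perturbation step: the locus of $\mathbb R$-line bundles that restrict to ample classes on every fiber is a priori an intersection of open cones indexed by all of $S$, so its openness--needed for the approximation to make sense--must be obtained by invoking the Noetherian hypothesis on $S$ together with the standard openness of the ample locus in a proper family, or alternatively by working within a fixed finite-dimensional subspace of $\Pic(X)\otimes\mathbb R$ containing $\mathcal L$ and $H$ where perturbations can be controlled uniformly.
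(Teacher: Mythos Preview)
Your overall strategy matches the paper's: apply Theorem \ref{f-thm1.3} on each fiber $X_s$ to get $\mathcal L|_{X_s}$ ample, then upgrade fiberwise ampleness to $\pi$-ampleness. The paper isolates the upgrade step as Lemma \ref{f-lem6.1} (fiberwise ample $\Rightarrow$ $\pi$-ample for $\mathbb R$-line bundles), proved via Lemma \ref{f-lem6.2}.

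There is, however, one genuine gap in your outline: the sentence ``each fiber is projective, so $\pi$ is projective and there exists a $\pi$-ample integer line bundle $H$.'' A proper morphism with projective fibers need not be projective, and the existence of a global $\pi$-ample integer $H$ is essentially equivalent to what you are trying to prove (namely that some $\mathbb Q$-perturbation of $\mathcal L$ is $\pi$-ample). So the detour through $H$---subtract $\epsilon H$, approximate, reassemble---is circular as written.

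The paper avoids $H$ entirely. Fixing once and for all an expression $\mathcal L=\sum_{j=1}^k b_j\mathcal M_j$ with $\mathcal M_j\in\Pic(X)$, Lemma \ref{f-lem6.2} uses openness of the ample cone of a single fiber $X_{s_0}$, read inside this fixed $\mathbb R^k$, to rewrite $\mathcal L$ as a positive real combination of integer line bundles each ample on $X_{s_0}$, hence each relatively ample over a Zariski neighborhood $U_{s_0}$ by the classical criterion for line bundles. Finitely many such $U_{s_\alpha}$ cover $S$, and Lemma \ref{f-lem6.1} repeats the openness argument in the \emph{same} $\mathbb R^k$ for the finitely many conditions ``relatively ample over $U_{s_\alpha}$'' to obtain a single global decomposition $\mathcal L=\sum_i a_i\mathcal L_i$ with each $\mathcal L_i$ a $\pi$-ample line bundle. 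This is exactly the second fix you proposed (work in a fixed finite-dimensional subspace of $\Pic(X)\otimes\mathbb R$ and use Noetherianity of $S$), carried out directly; the existence of a $\pi$-ample line bundle then falls out as a consequence rather than being assumed as an input.
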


For the details of the Nakai--Moishezon ampleness 
criterion and Kleiman's ampleness criterion for 
line bundles, see \cite{kleiman}. 
The reader can find many nontrivial examples of 
complete nonprojective varieties in \cite{fujino1}, 
\cite[Section 12]{fujino3}, \cite{fujino-payne}, and so on. 
Finally, we prove 
the following theorem as an application 
of Theorem \ref{f-thm1.3} 
by using some basic properties of algebraic spaces. 

\begin{thm}[Nakai--Moishezon ampleness criterion 
for real line bundles on complete algebraic spaces]\label{f-thm1.6} 
Let $X$ be a complete algebraic space over an algebraically closed field 
and let 
$\mathcal L$ be an $\mathbb R$-line bundle 
on $X$. Then $\mathcal L$ is ample if and only if 
$\mathcal L^{\dim Z}\cdot Z>0$ for every 
positive-dimensional closed integral subspace $Z \subset X$. 
\end{thm}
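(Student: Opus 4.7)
The ``only if'' direction is immediate from Theorem \ref{f-thm1.3}: an algebraic space admitting an ample $\mathbb R$-line bundle must be a (quasi-projective) scheme, so Theorem \ref{f-thm1.3} applies and yields the strict positivity of intersection numbers.

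For the ``if'' direction, the plan is to prove first that $X$ is in fact a projective scheme, after which Theorem \ref{f-thm1.3} finishes the proof. By the standard reduction used in passing from Theorem \ref{f-thm1.3} to Theorem \ref{f-thm1.4}, I may assume $X$ is an integral complete algebraic space. The main geometric input is Chow's lemma for algebraic spaces (due to Knutson): there exists a projective scheme $Y$ and a proper surjective birational morphism $\pi\colon Y\to X$ which is an isomorphism over a dense open subspace of $X$. The naive approach of checking the Nakai criterion for $\pi^*\mathcal L$ on $Y$ and invoking Theorem \ref{f-thm1.3} on $Y$ does not work, because for any closed integral subscheme $W\subset Y$ with $\dim\pi(W)<\dim W$, the projection formula yields $(\pi^*\mathcal L)^{\dim W}\cdot W=0$, violating strict positivity on $Y$.

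To circumvent this, I would perturb $\mathcal L$ inside the N\'eron--Severi space $N^1(X)_{\mathbb R}$. This space is finite-dimensional, since $\pi^*\colon N^1(X)_{\mathbb R}\hookrightarrow N^1(Y)_{\mathbb R}$ is injective (every curve on $X$ lifts to a curve on $Y$ by birationality of $\pi$, applying the projection formula) and the target is finite-dimensional. Moreover the Nakai condition reduces to finitely many numerical inequalities on classes of cycles---themselves in a finite-dimensional space by pushforward through $\pi$---and hence defines an open condition on $N^1(X)_{\mathbb R}$. One can then perturb $\mathcal L$ to a nearby rational class, clear denominators, and obtain an honest line bundle $\mathcal L_0\in\Pic(X)$ still satisfying the Nakai criterion. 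By the integer version of the Nakai--Moishezon criterion for complete algebraic spaces (due to Koll\'ar), $\mathcal L_0$ is then ample in the classical sense, so $X$ is a projective scheme; Theorem \ref{f-thm1.3} applied to $\mathcal L$ on $X$ concludes. The main obstacle, in my view, is establishing the openness of the Nakai condition on $N^1(X)_{\mathbb R}$---in particular that only finitely many numerical classes need to be tested---which is where the Chow cover $\pi$ is really used, to transfer facts from the projective setting back to $X$.
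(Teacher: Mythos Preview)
Your overall strategy---perturb $\mathcal L$ to a nearby $\mathbb Q$-line bundle, invoke the integer Nakai--Moishezon criterion for complete algebraic spaces (Koll\'ar, \cite{kollar}) to conclude that $X$ is projective, and then apply Theorem~\ref{f-thm1.3}---is exactly the paper's. The difference lies in how the perturbation step is justified, and there the paper makes a sharper choice of auxiliary cover that dissolves the obstacle you identify.

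Rather than Chow's lemma, the paper uses the fact (\cite[2.8.~Lemma]{kollar}) that any complete algebraic space admits a \emph{finite} surjective morphism $f\colon Y\to X$ from a complete scheme $Y$. Because $f$ is finite, every closed integral $W\subset Y$ maps onto a closed integral $Z=f(W)\subset X$ of the \emph{same} dimension, and the projection formula gives $(f^*\mathcal L)^{\dim W}\cdot W=(\deg f|_W)\cdot\mathcal L^{\dim Z}\cdot Z>0$. Hence $f^*\mathcal L$ itself satisfies the full Nakai criterion on the scheme $Y$, so Theorem~\ref{f-thm1.3} (already proved for schemes) gives that $f^*\mathcal L$ is ample on $Y$. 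One then uses openness of \emph{ampleness} on $Y$---which is immediate---instead of openness of the Nakai condition on $X$: writing $\mathcal L=\sum_j a_j\mathcal L_j$ and replacing each $a_j$ by a nearby rational $b_j$, the $\mathbb Q$-line bundle $\mathcal M=\sum_j b_j\mathcal L_j$ still has $f^*\mathcal M$ ample, and running the projection formula backwards shows that $m\mathcal M$ satisfies the integer Nakai criterion on $X$.

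With your birational Chow cover $\pi$, by contrast, $\pi^*\mathcal L$ is only nef on $Y$, so you cannot appeal to openness of ampleness and must argue openness of the Nakai condition on $X$ directly. Your sketch for this---``finitely many numerical classes need to be tested''---is where the gap sits. Finite-dimensionality of $N^1(X)_{\mathbb R}$ is not the issue; the problem is that the Nakai condition quantifies over \emph{all} effective cycle classes in each dimension, and one needs a uniform positive lower bound for $\mathcal L^d\cdot Z$ (suitably normalized) over all $Z$. Transferring cycles to $Y$ via lifts does not give this formally, since the normalized classes of such lifts can accumulate toward $\pi$-exceptional classes on which $(\pi^*\mathcal L)^d$ vanishes. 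Closing this gap would essentially require rerunning a Birkar-type augmented base locus argument as in the proof of Theorem~\ref{f-thm1.4}, which you have not done. The finite cover avoids all of this in one stroke.
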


We note that we treat algebraic spaces only in the final section, 
where we prove Theorem \ref{f-thm1.6}. 
We also note that the Nakai--Moishezon ampleness 
criterion for line bundles on complete algebraic spaces 
plays an crucial role in Koll\'ar's projectivity criterion for 
moduli spaces (see \cite{kollar} and \cite{fujino-moduli}). 
 
\begin{ack}
The first author was partially 
supported by JSPS KAKENHI Grant Numbers 
JP16H03925, JP16H06337. 
The second author was partially supported by 
JSPS KAKENHI Grant Number 20J20070. 
The authors thank Yoshinori Gongyo and Kenta Hashizume for 
comments. 
\end{ack}

\section{Preliminaries}\label{f-sec2}

For simplicity of notation, we write the group law of 
$\Pic(X)\otimes _{\mathbb Z}\mathbb R$ additively. 

\begin{defn}\label{f-def2.1}
Let $\mathcal L$ be an $\mathbb R$-line bundle on a complete 
scheme $X$. 
\begin{itemize}
\item If $\mathcal L=\sum _j l_j \mathcal L_j$ such that 
$l_j$ is a positive real number and $\mathcal L_j$ is an ample 
line bundle on $X$ for every $j$, then we say that $\mathcal L$ is {\em{ample}}. 
\item
If $\mathcal L=\sum _j l_j \mathcal L_j$ such that 
$l_j$ is a positive real number and $\mathcal L_j$ is a semi-ample 
line bundle on $X$ for every $j$, then we say that $\mathcal L$ 
is {\em{semi-ample}}. 
\item If $\mathcal L\cdot C\geq 0$ for every curve $C$ on $X$, 
then we say that $\mathcal L$ is {\em{nef}}. 
\item
We further assume that $X$ is a variety. 
If $\mathcal L=\sum _j l_j \mathcal L_j$ such that 
$l_j$ is a positive real number and $\mathcal L_j$ is a big 
line bundle on $X$ for every $j$, then we say that $\mathcal L$ is {\em{big}}. 
\end{itemize}
\end{defn}

In the theory of minimal models, we usually use $\mathbb R$-Cartier 
divisors. In this paper, we do not use $\mathbb R$-Weil divisors. 
We only use $\mathbb R$-Cartier divisors. 

\begin{defn}\label{f-def2.2}
Let $X$ be a complete scheme. 
We consider the following natural homomorphism 
$$
\psi\colon \Div(X)\otimes _{\mathbb Z}\mathbb R\to 
\Pic(X)\otimes _{\mathbb Z} \mathbb R. 
$$ We note that $\psi$ is not necessarily surjective. 
Let $D$ be an $\mathbb R$-Cartier divisor on $X$. 
If the image of $D$ by $\psi$ is ample, 
semi-ample, nef, and big, then 
$D$ is said to be {\em{ample}},  
{\em{semi-ample}}, {\em{nef}}, and {\em{big}}, respectively. 
We also note that $\psi$ is surjective when $X$ is 
a variety. 
\end{defn}

For the basic properties of bigness and 
semi-ampleness, see \cite[Section 2.1]{fujino2}. 
Here we only explain the following important characterization of 
nef and big $\mathbb R$-divisors. 
 
\begin{lem}\label{f-lem2.3}
Let $L$ be a nef $\mathbb R$-divisor on a projective 
variety $X$. 
Then $L$ is big if and only if $L^{\dim X}>0$. 
\end{lem}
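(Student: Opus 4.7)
The plan is to reduce Lemma~\ref{f-lem2.3} to its classical $\mathbb Q$-Cartier version (a well-known consequence of asymptotic Riemann--Roch together with vanishing for nef line bundles), treating the two implications separately.

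For $(\Rightarrow)$, I would assume $L$ is nef and big and use a Kodaira-type decomposition. Writing $L=\sum_j l_j L_j$ with $l_j>0$ and $L_j$ a big Cartier divisor, the classical Kodaira lemma applied to each summand yields $L_j\sim_{\mathbb Q}A_j+E_j$ with $A_j$ ample $\mathbb Q$-Cartier and $E_j$ effective $\mathbb Q$-Cartier; collecting gives $L\sim_{\mathbb R}A+E$ with $A$ ample $\mathbb R$-Cartier and $E$ effective $\mathbb R$-Cartier. Expanding $L^n=L^{n-1}\cdot A+L^{n-1}\cdot E$, each summand is nonnegative: the first by Kleiman's theorem on intersections of nef classes, and the second by restricting to each prime component of $E$ and invoking Kleiman on a subvariety of dimension $n-1$. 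Iterating the bound $L^{n-k}\cdot A^k \ge L^{n-k-1}\cdot A^{k+1}$ obtained in this way gives $L^n\ge A^n>0$.

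For $(\Leftarrow)$, I would assume $L$ is nef with $L^n>0$ and argue that $L$ lies in the open big cone of $N^1(X)_{\mathbb R}$. Fix an ample Cartier divisor $H$; then $L+\epsilon H$ is ample for every $\epsilon>0$, and by continuity of $t\mapsto(L+tH)^n$ the intersection $(L+\epsilon H)^n$ stays positive for small $\epsilon$. Density of rational classes in $N^1(X)_{\mathbb R}$ together with openness of the ample cone allows one to approximate $L+(1/k)H$ by $\mathbb Q$-ample classes $A_k$ with $A_k^n>0$; by the classical $\mathbb Q$-case each $A_k$ is $\mathbb Q$-big. Combining the strict inequality $L^n>0$ with the continuity of the top self-intersection form and the nefness of $L$ then places $L$ itself in the \emph{interior} of the pseudo-effective cone, and a convex-combination argument exhibits $L$ as a positive $\mathbb R$-linear combination of big Cartier divisors---that is, bigness in the sense of Definition~\ref{f-def2.2}.

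The main obstacle lies in this final step of $(\Leftarrow)$: limits of $\mathbb Q$-big classes are only pseudo-effective in general, so one must genuinely exploit the strict positivity of $L^n$ to conclude that $L$ is in the interior of the pseudo-effective cone, and then carry out the convex-geometric step to convert this into an explicit finite decomposition of the form demanded by Definition~\ref{f-def2.2}.
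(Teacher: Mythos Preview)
Your $(\Rightarrow)$ direction is essentially the paper's: write $L\sim_{\mathbb R}A+E$ with $A$ ample and $E$ effective, then iterate $L^{n-k}\cdot A^k\ge L^{n-k-1}\cdot A^{k+1}$ down to $A^n>0$.

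Your $(\Leftarrow)$ direction, however, has the gap you yourself flag, and the proposal does not close it. Knowing that $L+\varepsilon H$ is ample for all $\varepsilon>0$ only places $L$ on the boundary of the big cone; the assertion that ``$L^n>0$ together with continuity and nefness places $L$ in the \emph{interior} of the pseudo-effective cone'' is exactly the content of the lemma and is not justified by anything you have written. No amount of approximating $L$ from the ample side will do this: you need to produce a small ample $A$ with $L-A$ still pseudo-effective (indeed big), and that requires a genuinely new input beyond continuity of $(\,\cdot\,)^n$.

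The paper supplies this input via Lazarsfeld's numerical criterion for bigness \cite[Theorem~2.2.15]{lazarsfeld}: choose small ample $\mathbb R$-divisors $A_1,A_2$ with $L+A_1$ and $A_1+A_2$ both $\mathbb Q$-Cartier and
\[
(L+A_1)^n>n\bigl((L+A_1)^{n-1}\cdot(A_1+A_2)\bigr),
\]
which is possible precisely because $L^n>0$. The criterion then gives that $(L+A_1)-(A_1+A_2)=L-A_2$ is big, so $L=(L-A_2)+A_2$ is big plus ample, hence big in the sense of Definition~\ref{f-def2.1}. This is the step your sketch is missing; once you invoke it, the convex-combination manoeuvre you allude to becomes unnecessary.
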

\begin{proof}
We put $n=\dim X$. 
\begin{step}
If $L$ is big, then we can write $L\sim _{\mathbb R} A+D$ where 
$A$ is an ample $\mathbb R$-divisor 
and $D$ is an effective $\mathbb R$-Cartier divisor on $X$, 
where $\sim _{\mathbb R}$ denotes the $\mathbb R$-linear 
equivalence of $\mathbb R$-Cartier divisors. 
Then 
\begin{equation*}
L^n=(A+D)\cdot L^{n-1} \geq A\cdot L^{n-1}=A\cdot (A+D)\cdot L^{n-2} \geq \cdots 
\geq A^n>0. 
\end{equation*}
Hence $L^{\dim X}>0$ holds true when $L$ is big. 
\end{step}
\begin{step} 
In this step, we will 
check that $L$ is big under the assumption that $L^n>0$ holds. 
We will closely follow the proof of \cite[Theorem 2.3.18]{lazarsfeld}. 
We take ample $\mathbb R$-divisors $A_1$ and $A_2$ on $X$ such that 
$L+A_1$ and $A_1+A_2$ are $\mathbb Q$-Cartier divisors on $X$. 
Since $L^n>0$, we can assume that 
$$
(L+A_1)^n>n\left((L+A_1)^{n-1}\cdot (A_1+A_2)\right)
$$ 
holds by taking $A_1$ and $A_2$ sufficiently small. 
By the numerical criterion for bigness (see \cite[Theorem 2.2.15]{lazarsfeld}), 
$$
L-A_2=(L+A_1)-(A_1+A_2)
$$ 
is big. 
Hence $L$ is also big. 
\end{step}
We finish the proof of Lemma \ref{f-lem2.3}. 
\end{proof}

\section{Augmented base loci of $\mathbb R$-divisors}\label{f-sec3}

In this section, we explain some properties of augmented base loci of 
$\mathbb R$-divisors following \cite{birkar}. 
Let us start with the definition of base loci and stable base loci. 

\begin{defn}[Base loci and stable base loci of $\mathbb Q$-divisors]
\label{f-def3.1}
Let $X$ be a projective scheme and let $D$ be a Cartier divisor 
on $X$. 
The {\em{base locus}} of $D$ is defined as 
$$
\Bs\!|D|=\{ x\in X \,|\, \text{$\alpha$ vanishes at $x$ for every 
$\alpha \in H^0(X, \mathcal O_X(D))$}\}. 
$$ 
We consider $\Bs\!|D|$ with the reduced scheme structure. 

The {\em{stable base locus}} of a $\mathbb Q$-Cartier divisor 
$L$ on $X$ is defined as 
$$
\mathbf B(L)=\bigcap _m \Bs\!|mL|
$$ 
where $m$ runs over all positive integers 
such that $mL$ is Cartier. 
Note that $\mathbf B(L)$ is considered with the reduced scheme structure. 
\end{defn}

The notion of augmented base loci plays a crucial role 
in the theory of minimal models. 

\begin{defn}[Augmented base loci of $\mathbb R$-divisors]\label{f-def3.2}
Let $X$ be a projective scheme and let $L$ be an $\mathbb R$-Cartier 
divisor on $X$. 
We put 
$$
\mathbf B_+(L)=\bigcap _H \mathbf B(L-H)
$$ 
where $H$ runs over all ample $\mathbb R$-divisors such that 
$L-H$ is $\mathbb Q$-Cartier. 
As usual, we consider $\mathbf B_+(L)$ with 
the reduced scheme structure. 
We call $\mathbf B_+(L)$ the {\em{augmented base locus}} of 
$L$. 
\end{defn}

Birkar defined 
$\mathbf B_+(L)$ differently (see \cite[Definition 1,2]{birkar}). 
Then he proved that his definition coincides with the usual 
one (see Definition \ref{f-def3.2}). For the details, see 
\cite[Lemma 3.1 (3)]{birkar}. 

In order to explain Birkar's theorem (see Theorem \ref{f-thm3.4}), 
it is convenient to introduce 
the notion of exceptional loci of $\mathbb R$-divisors. 

\begin{defn}[Exceptional loci of $\mathbb R$-divisors]\label{f-def3.3} 
Let $X$ be a projective scheme and let $L$ be an $\mathbb R$-Cartier 
divisor on $X$. 
The {\em{exceptional locus}} of $L$ is 
defined as 
$$
\mathbb E(L)=\bigcup_{\dim V>0, \ L|_V \ \text{is not big}} V, 
$$ 
that is, the union runs over the positive-dimensional 
subvarieties $V\subset X$ such that 
$L|_V$ is not big. 
\end{defn}

Note that $\mathbb E(L)$ is sometimes called 
the {\em{null locus}} of $L$ when $L$ is nef. 

\begin{thm}[{\cite[Theorem 1.4]{birkar}}]\label{f-thm3.4}
Let $X$ be a projective scheme. 
Assume that $L$ is a nef $\mathbb R$-divisor on $X$. 
Then 
$$
\mathbf B_+(L)=\mathbb E(L)
$$ 
holds. 
\end{thm}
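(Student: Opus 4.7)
The plan is to establish the two inclusions separately; the inclusion $\mathbb E(L)\subseteq \mathbf B_+(L)$ is essentially formal from Definition \ref{f-def3.2}, while $\mathbf B_+(L)\subseteq \mathbb E(L)$ carries the real content of the theorem and is where I expect the main obstacle to lie.

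For $\mathbb E(L)\subseteq \mathbf B_+(L)$, I would argue contrapositively: take a positive-dimensional closed subvariety $V\subseteq X$ with $V\not\subseteq \mathbf B_+(L)$ and show that $L|_V$ is big. By Definition \ref{f-def3.2}, there exists an ample $\mathbb R$-divisor $H$ with $L-H$ $\mathbb Q$-Cartier and $V\not\subseteq \mathbf B(L-H)$, so for some integer $m>0$ with $m(L-H)$ Cartier one finds a section $s\in H^0(X, m(L-H))$ whose zero divisor $D$ does not contain $V$. Then
\[
L|_V \sim_{\mathbb R} H|_V + \tfrac{1}{m} D|_V
\]
exhibits $L|_V$ as $\mathbb R$-linearly equivalent to the sum of an ample $\mathbb R$-divisor and an effective $\mathbb R$-Cartier divisor on $V$, so $L|_V$ is big and hence $V\not\subseteq \mathbb E(L)$.

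For the reverse inclusion, let $W$ be an irreducible component of $\mathbf B_+(L)$; the aim is to show $L|_W$ is not big. I would proceed by contradiction, assuming $L|_W$ is big. Since $L|_W$ is also nef, Lemma \ref{f-lem2.3} applied on $W$ gives $(L|_W)^{\dim W}>0$, and Kodaira's lemma produces a decomposition $L|_W\sim_{\mathbb R} A+E$ with $A$ ample and $E$ effective on $W$. The strategy is then to propagate this decomposition to a Zariski neighborhood of the generic point of $W$ in $X$: for sufficiently small ample $\mathbb R$-divisors $H$ with $L-H$ $\mathbb Q$-Cartier, one wants to produce sections of some multiple of $L-H$ on $X$ that do not vanish at the generic point of $W$, which would force $W\not\subseteq \mathbf B(L-H)$ and contradict $W\subseteq \mathbf B_+(L)$.

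The main obstacle is precisely this propagation step. There is no direct way to lift the effective representative $E$ from $W$ back to a global decomposition of $L-H$ that is effective near the generic point of $W$, because $W\hookrightarrow X$ need not be regularly embedded, $X$ itself may be singular or reducible, and we are working only up to $\mathbb R$-linear equivalence. This is the content of Nakamaye's theorem in its general form, and the genuine argument (attributed to Birkar as stated) requires either asymptotic multiplier ideals together with Fujita-type vanishing on a suitable resolution, or a Fujita-style volume approximation of $L|_W$ combined with a careful perturbation argument to transport bigness from $W$ into a Zariski neighborhood in $X$. I would defer the detailed execution of this step to \cite{birkar}.
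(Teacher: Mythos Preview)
The paper does not give a proof of this theorem: it is quoted verbatim from \cite[Theorem~1.4]{birkar} and the reader is referred there for the argument. Your proposal therefore already goes beyond what the paper does. Your sketch of the inclusion $\mathbb E(L)\subseteq\mathbf B_+(L)$ is correct (the final clause ``hence $V\not\subseteq\mathbb E(L)$'' is phrased slightly loosely---what you have actually shown is the contrapositive ``$L|_V$ not big $\Rightarrow V\subseteq\mathbf B_+(L)$'', which is exactly what is needed). For the hard inclusion you correctly identify the essential difficulty and defer to Birkar, which is precisely the paper's own stance on the entire theorem.
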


For the details of Theorem \ref{f-thm3.4}, 
we strongly recommend the reader to see Birkar's original 
statement in \cite[Theorem 1.4]{birkar}. 
We will use Theorem \ref{f-thm3.4} when 
$X$ is a normal projective variety in the proof of 
Theorem \ref{f-thm1.4}. 

\section{Proof of Theorem \ref{f-thm1.4}}\label{f-sec4}

In this section, we prove Theorem \ref{f-thm1.4}. 
The main ingredient of the proof of Theorem \ref{f-thm1.4} below 
is Birkar's theorem (see Theorem \ref{f-thm3.4}). 

\begin{proof}[Proof of Theorem \ref{f-thm1.4}]
Let $$
X=\bigcup _{i=1}^k U_i
$$ 
be a finite affine Zariski open cover of $X$. 
Let $\overline{U}_i$ be the closure of 
$U_i$ in $\mathbb P^{N_i}$. 
By \cite[Lemma 2.2]{lutkebohmert}, which is an easy application of 
the flattening theorem (see \cite[Th\'eor\`eme (5.2.2)]{raynaud}), 
we can take an ideal sheaf 
$\mathcal I$ on $\overline {U}_i$ 
with $\Supp \mathcal O_{\overline {U}_i}/\mathcal I\subset 
\overline {U}_i\setminus U_i$ 
such that the blow-up of $\overline {U}_i$ along 
$\mathcal I$ eliminates the indeterminacy of $\overline {U}_i
\dashrightarrow X$. 
Therefore, by taking the normalization of the blow-up of 
$\overline {U}_i$ along $\mathcal I$, we get a projective 
birational morphism $\pi_i\colon X_i\to X$ 
from a normal projective variety $X_i$ 
such that 
$\pi_i\colon \pi^{-1}_i(U_i)\to U_i$ is an isomorphism. 
$$
\xymatrix{
& X_i\ar[dl]\ar[dr]^-{\pi_i} & \\ 
\overline {U}_i \ar@{-->}[rr]&& X
}
$$ 
By Theorem \ref{f-thm3.4}, 
there exists an ample $\mathbb R$-divisor 
$H_i$ on $X_i$ such that 
$\pi^*_i L-H_i$ is $\mathbb Q$-Cartier 
and that 
$$
\mathbf B (\pi^*_i L-H_i)=\mathbf B_+(\pi^*_iL)=\mathbb E(\pi^*_iL)
$$ 
holds. Let $\Exc(\pi_i)$ be the exceptional locus of $\pi_i$. 
Then 
$$
\mathbb E(\pi^*_iL)=\Exc (\pi_i)
$$ holds by Lemma \ref{f-lem2.3} and 
the assumption that $L^{\dim Z}\cdot Z>0$ 
for every positive-dimensional closed subvariety $Z\subset X$. 
Since $L$ is $\mathbb R$-Cartier, 
we can write 
$$
L=\sum _{j\in J} l_j L_j
$$ 
such that $l_j$ is a real number and $L_j$ is Cartier 
for every $j\in J$. 
If $m_j \in \mathbb Q$ holds for every $j\in J$ and 
$$
\pi^*_i\left(\sum _{j\in J} m_jL_j\right)-\pi^*_i L+H_i
$$ 
is ample, then 
$$
\mathbf B\left(\pi^*_i \left(\sum _{j\in J} m_j L_j\right)\right)\subset 
\mathbf B(\pi^*_i L-H_i) =\Exc (\pi_i)
$$ 
holds. 
This implies that 
$$
\mathbf B\left(\sum _{j\in J} m_j L_j\right)\subset \pi_i (\Exc(\pi_i))
\subset X\setminus U_i. 
$$ 
Hence, there exists a positive real number $\varepsilon$ such that 
if $m_j\in \mathbb Q$ and $|l_j-m_j|<\varepsilon$ for every $j\in J$ then 
$$
\mathbf B\left(\sum _{j\in J} m_j L_j\right)\subset \bigcap _{i=1}^k 
\left(X\setminus U_i\right)=\emptyset
$$ 
holds. 
This means that $\sum _{j\in J}m_jL_j$ is semi-ample. 
By this fact, we can write 
$$
L=\sum_p r_p M_p 
$$ such that $r_p$ is a positive real number and 
$M_p$ is a semi-ample $\mathbb Q$-divisor for every $p$. 
Therefore, $L$ is a semi-ample $\mathbb R$-divisor by definition. 
Thus there exist a morphism $f\colon X\to Y$ onto a 
normal projective variety $Y$ with $f_*\mathcal O_X\simeq 
\mathcal O_Y$ and 
an ample $\mathbb R$-divisor $A$ on $Y$ such that 
$L$ is $\mathbb R$-linearly equivalent to 
$f^*A$ (see \cite[Lemma 2.1.11]{fujino2}). 
By assumption, $L\cdot C>0$ for every curve $C$ on 
$X$. This implies that $f$ is an isomorphism. 
Thus $L$ is an ample $\mathbb R$-divisor. 
\end{proof}

\section{Proof of Theorem \ref{f-thm1.3}}\label{f-sec5}

In this section, we prove Theorem \ref{f-thm1.3}. 
More precisely, we reduce Theorem \ref{f-thm1.3} to 
a special case where $X$ is a normal variety, which is 
nothing but Theorem \ref{f-thm1.4}. Let us start with the following 
elementary lemma. 

\begin{lem}\label{f-lem5.1}
Let $X$ be a complete scheme and let 
$\mathcal L$ be an $\mathbb R$-line bundle on $X$. 
Let $X=\bigcup_{i=1}^k X_i$ be the irreducible decomposition of $X$. 
Then $\mathcal L$ is ample if and only if 
$\mathcal L|_{(X_i)_{\mathrm{red}}}$ is ample 
for every $i$. 
\end{lem}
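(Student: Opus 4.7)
The ``only if'' direction is immediate from Definition \ref{f-def2.1}: writing $\mathcal L = \sum_j l_j \mathcal A_j$ with $l_j > 0$ and each $\mathcal A_j$ ample on $X$, the restriction of each $\mathcal A_j$ to the closed subscheme $(X_i)_{\mathrm{red}}$ preserves ampleness (a standard fact for ample line bundles on complete schemes, since a suitable tensor power very-amply embeds $X$ into projective space), and the same combination realizes $\mathcal L|_{(X_i)_{\mathrm{red}}}$ as ample.

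For the ``if'' direction, my plan is to bootstrap from the classical fact that an ordinary line bundle on a complete scheme is ample if and only if its restriction to each reduced irreducible component is ample (see, e.g., Hartshorne, Exercise III.5.7). First I would observe that each $(X_i)_{\mathrm{red}}$ is projective, since by hypothesis it carries an ample $\mathbb R$-line bundle, and Definition \ref{f-def2.1} then forces the existence of at least one ample line bundle on it. Next, I would write $\mathcal L = \sum_{j \in J} r_j \mathcal L_j$ with $r_j \in \mathbb R$ and $\mathcal L_j \in \Pic(X)$, and let $V \subset \Pic(X) \otimes_{\mathbb Z} \mathbb R$ denote the finite-dimensional $\mathbb R$-subspace spanned by the $\mathcal L_j$.

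Because the ample cone of each projective $(X_i)_{\mathrm{red}}$ is open in its real N\'eron--Severi space (by Kleiman's criterion), the subset $\Omega \subset V$ consisting of elements whose restriction to every $(X_i)_{\mathrm{red}}$ is ample is open, convex, and contains $\mathcal L$. By density of the rational points of $V$, I can select finitely many $\mathcal M_1, \ldots, \mathcal M_s \in \Omega \cap (\Pic(X) \otimes_{\mathbb Z} \mathbb Q)$ and positive reals $t_\alpha$ with $\sum_\alpha t_\alpha = 1$ such that $\mathcal L = \sum_\alpha t_\alpha \mathcal M_\alpha$ exactly, for instance by taking the $\mathcal M_\alpha$ as vertices of a small rational simplex in $\Omega$ whose interior contains $\mathcal L$. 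For each $\mathcal M_\alpha$, clearing denominators yields an honest line bundle $N_\alpha \mathcal M_\alpha$ on $X$ still restricting to an ample line bundle on each $(X_i)_{\mathrm{red}}$, hence ample on $X$ by the cited classical result. Therefore each $\mathcal M_\alpha$ is itself an ample $\mathbb R$-line bundle by Definition \ref{f-def2.1}, and $\mathcal L = \sum_\alpha t_\alpha \mathcal M_\alpha$ exhibits $\mathcal L$ as a positive real combination of ample $\mathbb R$-line bundles, proving ampleness.

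The main obstacle I anticipate is the rational-approximation step: converting the $\mathbb R$-coefficient hypothesis into finitely many exact $\mathbb Q$-line-bundle building blocks that still satisfy the ampleness-on-components condition, and then realizing $\mathcal L$ as their exact positive convex combination rather than merely as a limit. This is where the openness of the ample cone on each projective component becomes essential, and while the resulting simplex argument is standard once set up, it requires that one work throughout inside the finite-dimensional subspace $V$ (so that density of rationals can be invoked) and that the chosen rational perturbations never leave $\Omega$.
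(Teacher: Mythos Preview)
Your proof is correct and follows essentially the same strategy as the paper's: write $\mathcal L$ as a real combination of line bundles, use openness of ampleness on each component to find nearby rational combinations that remain ample on every $(X_i)_{\mathrm{red}}$, express $\mathcal L$ as a positive combination of these, and invoke the classical $\mathbb Q$-line-bundle version of the lemma. The only cosmetic differences are that you phrase the argument inside the subspace $V\subset \Pic(X)\otimes_{\mathbb Z}\mathbb R$ whereas the paper works directly in the coefficient space $\mathbb R^m$, and you explicitly record the projectivity of each $(X_i)_{\mathrm{red}}$ to justify openness of the ample cone.
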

\begin{proof}
This statement is well known for $\mathbb Q$-line bundles. 
Hence we will freely use this lemma for $\mathbb Q$-line bundles 
in the following argument. 
\setcounter{step}{0}
\begin{step}
If $\mathcal L$ is ample, then 
it is obvious that so is $\mathcal L|_{(X_i)_{\mathrm{red}}}$ for 
every $i$. 
This is because we can write 
$$
\mathcal L=\sum _j a_j \mathcal L_j,  
$$ 
where $\mathcal L_j$ is an ample line bundle 
on $X$ and $a_j>0$ for every $j$. 
\end{step}
\begin{step}
In this step, we will prove that $\mathcal L$ is ample 
under the assumption that 
$\mathcal L|_{(X_i)_{\mathrm{red}}}$ is ample for every $i$. 
Since $\mathcal L$ is an $\mathbb R$-line bundle, we can write 
$$
\mathcal L=\sum _{j=1}^m l_j \mathcal L_j, 
$$ 
where $\mathcal L_j\in \Pic (X)$ and $l_j\in \mathbb R$ for 
every $j$. 
We put 
$$
V_i =\left\{ (p_1, \ldots, p_m)\in \mathbb R^m\, \left|\, 
\sum _{j=1}^m p_j \mathcal L_j|_{(X_i)_{\mathrm{red}}} 
\ \text{is ample}\right.\right\}
$$
for every $i$. Then $V_i$ contains an open neighborhood of 
$l=(l_1, \ldots, l_m)$ for every $i$ since 
$\sum _{j=1}^ml_j \mathcal L_j|_{(X_i)_{\mathrm{red}}}$ is ample 
by assumption. 
Hence $V=\bigcap _{i=1}^k V_i$ contains 
a small open neighborhood of $l\in \mathbb R^m$. 
Thus we can take positive real numbers $r_1, \ldots, r_p$ and 
$$
v_1=(v_{11}, \ldots, v_{1m}), \ \ldots, \ v_p =(v_{p1}, \ldots, v_{pm})\in V\cap 
\mathbb Q^m
$$ 
such that $l=\sum _{\alpha=1} ^p r_\alpha v_\alpha$. 
Then 
$$
\mathcal A_\alpha:=\sum _{j=1}^m v_{\alpha j} \mathcal L_j \in \Pic(X)\otimes 
_{\mathbb Z} \mathbb Q
$$ 
is ample for every $\alpha$ since $v_\alpha \in V\cap \mathbb Q^m$. 
Since we can write 
$$
\mathcal L=\sum _{\alpha=1}^p r_\alpha \mathcal A_\alpha, 
$$ 
$\mathcal L$ is ample by definition. 
\end{step}
We finish the proof of Lemma \ref{f-lem5.1}. 
\end{proof}

\begin{lem}\label{f-lem5.2}
Let $X$ be a complete variety 
and let $\mathcal L$ be an $\mathbb R$-line bundle on 
$X$. Let $\pi\colon Y\to X$ be a finite surjective morphism between 
complete varieties. 
Then $\mathcal L$ is ample if and only if $\pi^*\mathcal L$ is ample. 
\end{lem}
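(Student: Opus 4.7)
The plan is to mirror the argument of Lemma \ref{f-lem5.1}: exploit the openness of the ample cone to reduce to the case of $\mathbb Q$-line bundles, for which the classical descent of ampleness along finite surjective morphisms between complete varieties is available. The ``only if'' direction is immediate from the definition. If $\mathcal L=\sum_j a_j \mathcal L_j$ with $a_j>0$ and $\mathcal L_j$ ample line bundles on $X$, then each $\pi^*\mathcal L_j$ is an ample integer line bundle on $Y$ by the classical fact that pullback by a finite surjective morphism preserves ampleness, so $\pi^*\mathcal L=\sum_j a_j \pi^*\mathcal L_j$ is ample on $Y$ by definition.

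For the ``if'' direction, suppose $\pi^*\mathcal L$ is ample. Write $\mathcal L=\sum_{j=1}^m l_j \mathcal L_j$ with $l_j\in\mathbb R$ and $\mathcal L_j\in\Pic(X)$, and consider
$$
W=\left\{(p_1,\ldots,p_m)\in\mathbb R^m\,\left|\,\sum_{j=1}^m p_j \pi^*\mathcal L_j\ \text{is ample on}\ Y\right.\right\}.
$$
By hypothesis $l=(l_1,\ldots,l_m)\in W$, and $W$ is open, since the ample cone in $\Pic(Y)\otimes_{\mathbb Z}\mathbb R$ is open; this is a standard fact already used implicitly in the proof of Lemma \ref{f-lem5.1}, and ultimately reduces to the classical statement that $n\mathcal A\pm\mathcal N$ is ample for $n\gg 0$ whenever $\mathcal A$ is an ample line bundle and $\mathcal N$ is an arbitrary line bundle on a complete scheme. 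As in Lemma \ref{f-lem5.1}, we may therefore choose positive real numbers $r_1,\ldots,r_p$ and rational points $v_\alpha=(v_{\alpha 1},\ldots,v_{\alpha m})\in W\cap\mathbb Q^m$ with $l=\sum_{\alpha=1}^p r_\alpha v_\alpha$.

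For each $\alpha$ the $\mathbb Q$-line bundle $\sum_j v_{\alpha j}\pi^*\mathcal L_j=\pi^*\bigl(\sum_j v_{\alpha j}\mathcal L_j\bigr)$ is ample on $Y$. Multiplying by a positive integer $N_\alpha$ that clears denominators and applying the classical descent of ampleness for integer line bundles along finite surjective morphisms between complete varieties shows that $\mathcal A_\alpha:=\sum_j v_{\alpha j}\mathcal L_j$ is an ample $\mathbb Q$-line bundle on $X$. Hence $\mathcal L=\sum_{\alpha=1}^p r_\alpha \mathcal A_\alpha$ is a positive real combination of ample $\mathbb Q$-line bundles and is therefore ample by definition. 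The only delicate points, though both facts are essentially standard, are precisely the two classical inputs used above: the openness of the ample cone in $\Pic(X)\otimes_{\mathbb Z}\mathbb R$, and the descent of ampleness for integer line bundles along finite surjective morphisms between complete varieties.
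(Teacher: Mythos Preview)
Your proof is correct and follows essentially the same approach as the paper's: both write $\mathcal L=\sum_j l_j\mathcal L_j$, use openness of the ample locus on $Y$ to find nearby rational coefficient vectors whose corresponding $\mathbb Q$-line bundles still have ample pullback, descend ampleness via the classical $\mathbb Q$-case, and conclude that $\mathcal L$ is a positive real combination of ample $\mathbb Q$-line bundles. The only cosmetic difference is that you phrase the openness step via the set $W$ and an explicit convex combination (mirroring Lemma \ref{f-lem5.1}), whereas the paper states it directly as an $\varepsilon$-perturbation.
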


\begin{proof}
This statement is well known for $\mathbb Q$-line bundles. 
Hence we will freely use this lemma for $\mathbb Q$-line bundles 
in this proof. 
Thus it is obvious that 
$\pi^*\mathcal L$ is ample when $\mathcal L$ is ample. 
Therefore, it is sufficient to prove that $\mathcal L$ is ample 
under the assumption that 
$\pi^*\mathcal L$ is ample. 
Since $\mathcal L$ is an $\mathbb R$-line bundle, 
we can write 
$$
\mathcal L=\sum _{j=1}^m l_j \mathcal L_j, 
$$ 
where $\mathcal L_j \in \Pic (X)$ and $l_j\in \mathbb R$ for 
every $j$. 
Since $\pi^*\mathcal L$ is ample, 
there exists a positive real number $\varepsilon$ such that 
if $|l_j-\alpha_j|<\varepsilon$ for every $j$ then 
$$\pi^*\left(\sum _{j=1}^m \alpha_j \mathcal L_j\right)$$ is ample. 
Moreover, if we further assume 
$\alpha_j\in \mathbb Q$ for every $j$, 
then  $$\sum _{j=1}^m \alpha_j \mathcal L_j$$ is ample since 
$\pi$ is a finite surjective morphism. 
Hence we can write 
$$
\mathcal L=\sum _i r_i \mathcal A_i
$$ 
such that $r_i$ is a positive real number and $\mathcal A_i$ is an ample 
line bundle for every $i$. 
This means that $\mathcal L$ is ample by definition. 
\end{proof}

Let us prove Theorem \ref{f-thm1.3}. 

\begin{proof}[Proof of Theorem \ref{f-thm1.3}]
By Lemma \ref{f-lem5.1}, we may assume that $X$ is a 
variety. Let $\nu\colon X^\nu\to X$ be the normalization. 
Note that $\nu$ is a finite surjective morphism. 
Then by Lemma \ref{f-lem5.2} it is sufficient to 
prove that $\nu^*\mathcal L$ is ample. 
Hence we may further assume that $X$ is a complete normal 
variety. In this case, the ampleness of $\mathcal L$ follows 
from Theorem \ref{f-thm1.4}. 
\end{proof}

\section{Proof of Theorem \ref{f-thm1.5}}\label{f-sec6}

In this section, we prove Theorem \ref{f-thm1.5}. 
The following lemma is well known for $\mathbb Q$-line bundles. 

\begin{lem}\label{f-lem6.1}
Let $\pi\colon X\to S$ be a proper surjective morphism 
between schemes and let $\mathcal L$ be an $\mathbb R$-line 
bundle on $X$. 
Assume that $\mathcal L|_{X_s}$ is ample 
for every closed point $s\in S$, where $X_s=\pi^{-1}(s)$. 
Then 
$\mathcal L$ is $\pi$-ample, that is, we can write 
$$\mathcal L=\sum _i a_i \mathcal L_i$$ 
in 
$\Pic(X)\otimes _{\mathbb Z}\mathbb R$ such that 
$\mathcal L_i$ is a $\pi$-ample line bundle 
on $X$ and $a_i$ is a positive real number for every $i$. 
\end{lem}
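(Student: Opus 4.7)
The plan is to reduce to the $\mathbb Q$-line bundle version of this statement, which the lemma flags as well known, by perturbing $\mathcal L$ to nearby $\mathbb Q$-line bundles while preserving fiberwise ampleness. Since $\pi$-ampleness is local on $S$, I may assume $S$ is quasi-compact. Write $\mathcal L=\sum_{j=1}^m l_j\mathcal L_j$ with $l_j\in\mathbb R$ and $\mathcal L_j\in\Pic(X)$, and set $l=(l_1,\dots,l_m)\in\mathbb R^m$. For every closed $s\in S$, the hypothesis says $\mathcal L|_{X_s}$ is ample; in particular $X_s$ is projective, its ample cone is open and convex in $\Pic(X_s)\otimes\mathbb R$, and the preimage $V_s\subset\mathbb R^m$ of the ample cone of $X_s$ under the linear map $\alpha\mapsto\sum_j\alpha_j\mathcal L_j|_{X_s}$ is an open neighborhood of $l$.

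The core construction is the following. For each closed $s\in S$, choose a rational simplex $\Sigma_s\subset V_s$ whose interior contains $l$. For each vertex $\alpha_s^{(i)}$ of $\Sigma_s$, applying the openness of ampleness in proper families to an integer multiple of $\sum_j\alpha_{s,j}^{(i)}\mathcal L_j$ that clears denominators yields an open neighborhood $U_s^{(i)}\subset S$ of $s$ over which this $\mathbb Q$-line bundle is fiberwise ample. Put $U_s=\bigcap_iU_s^{(i)}$. Since the ample cone of every projective fiber is convex, for each $t\in U_s$ the whole simplex $\Sigma_s$ maps into the ample cone of $X_t$. By quasi-compactness of $S$, I cover $S$ by finitely many $U_{s_1},\dots,U_{s_N}$, and then inscribe a rational simplex $\Sigma$ around $l$ inside the open set $\bigcap_{i=1}^N\mathrm{int}(\Sigma_{s_i})$.

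Each vertex $v^{(\beta)}$ of $\Sigma$ now lies in every $\Sigma_{s_i}$, so by the previous paragraph the $\mathbb Q$-line bundle $\mathcal M^{(\beta)}=\sum_jv_j^{(\beta)}\mathcal L_j$ is fiberwise ample on every $X_t$. By the $\mathbb Q$-line bundle version of the lemma, each $\mathcal M^{(\beta)}$ is $\pi$-ample. Writing $l=\sum_\beta r_\beta v^{(\beta)}$ with $r_\beta>0$ (possible since $l\in\mathrm{int}(\Sigma)$) gives $\mathcal L=\sum_\beta r_\beta\mathcal M^{(\beta)}$, and after expanding each $\mathcal M^{(\beta)}$ as a positive rational combination of $\pi$-ample integer line bundles we realize $\mathcal L$ as a positive real combination of $\pi$-ample integer line bundles, as required.

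The main obstacle is the uniform openness of the fiberwise ample condition. A priori the open sets $V_s$ live in images of different Picard groups $\Pic(X_s)\otimes\mathbb R$, so there is no direct reason for $\bigcap_{s\text{ closed}}V_s$ to contain any open neighborhood of $l$. The resolution sketched above replaces each $V_s$ by an inscribed rational simplex, to which the openness of ampleness in families can be applied at the vertices (where one is dealing with honest $\mathbb Q$-line bundles on $X$), and then intersects finitely many of these simplices using the quasi-compactness of $S$; this reduces the $\mathbb R$-question to finitely many applications of the classical $\mathbb Q$-version.
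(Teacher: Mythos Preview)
Your proof is correct and follows essentially the same strategy as the paper: write $\mathcal L=\sum_j l_j\mathcal L_j$, use that ampleness on a fixed fiber is an open condition in the coefficient space $\mathbb R^m$, spread nearby rational points to open neighborhoods in $S$ via the classical openness of ampleness for line bundles, extract a finite subcover by quasi-compactness, and then realize $l$ as a positive combination of rational points lying in the resulting common open set. The only organizational difference is that the paper packages the local step as a separate Lemma~6.2 (showing $\mathcal L$ itself is $\pi$-ample over some $U_{s_0}$) and then intersects the relatively-ample coefficient sets $\mathcal A_\alpha$, whereas you carry rational simplices throughout and invoke the $\mathbb Q$-line bundle version of the lemma on their vertices at the very end; both routes use the same ingredients.
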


Before we prove Lemma \ref{f-lem6.1}, we 
prepare the following lemma, which is also well known for 
$\mathbb Q$-line bundles. 

\begin{lem}\label{f-lem6.2}
Let $\pi\colon X\to S$ be a proper surjective morphism 
between schemes and let $\mathcal L$ be an $\mathbb R$-line 
bundle on $X$. 
Assume that $\mathcal L|_{X_{s_0}}$ is ample 
for some closed point $s_0\in S$, where $X_{s_0}=\pi^{-1}(s_0)$. 
Then there exists a Zariski open neighborhood $U_{s_0}$ of $s_0$ 
such that $\mathcal L|_{\pi^{-1}(U_{s_0})}$ is 
ample over $U_{s_0}$. 
\end{lem}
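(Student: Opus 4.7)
The plan is to reduce to the well-known $\mathbb{Q}$-line bundle version of the lemma through a rational approximation argument in the spirit of Lemmas \ref{f-lem5.1} and \ref{f-lem5.2}. First I would write $\mathcal{L}=\sum_{j=1}^m l_j\mathcal{L}_j$ with $\mathcal{L}_j\in\Pic(X)$ and $l_j\in\mathbb{R}$, and set $l=(l_1,\ldots,l_m)\in\mathbb{R}^m$.

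The crux is openness of ampleness on the fiber. Since $\pi$ is proper and $s_0$ is closed, $X_{s_0}$ is complete. Because $\mathcal{L}|_{X_{s_0}}$ is ample, its expression as a positive $\mathbb{R}$-combination of ample integer line bundles (Definition \ref{f-def2.1}) produces at least one ample integer line bundle on $X_{s_0}$; hence $X_{s_0}$ is actually projective, and the classical openness of the ample cone for $\mathbb{R}$-line bundles on a projective scheme implies that
$$
V=\left\{(\alpha_1,\ldots,\alpha_m)\in\mathbb{R}^m\,\left|\,\sum_{j=1}^m\alpha_j\mathcal{L}_j|_{X_{s_0}}\ \text{is ample}\right.\right\}
$$
contains an open neighborhood of $l$.

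Next I would choose positive real numbers $r_1,\ldots,r_p$ and rational points $v_1=(v_{11},\ldots,v_{1m}),\ldots,v_p=(v_{p1},\ldots,v_{pm})\in V\cap\mathbb{Q}^m$ such that $l=\sum_{\alpha=1}^p r_\alpha v_\alpha$, exactly as in Step 2 of the proof of Lemma \ref{f-lem5.1}. Setting $\mathcal{A}_\alpha:=\sum_{j=1}^m v_{\alpha j}\mathcal{L}_j\in\Pic(X)\otimes_{\mathbb{Z}}\mathbb{Q}$, each $\mathcal{A}_\alpha|_{X_{s_0}}$ is an ample $\mathbb{Q}$-line bundle, so the classical $\mathbb{Q}$-line bundle version of the present lemma yields a Zariski open neighborhood $U_\alpha$ of $s_0$ over which $\mathcal{A}_\alpha$ is $\pi$-ample.

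Finally I would set $U_{s_0}=\bigcap_{\alpha=1}^p U_\alpha$, a Zariski open neighborhood of $s_0$, and observe that
$$
\mathcal{L}|_{\pi^{-1}(U_{s_0})}=\sum_{\alpha=1}^p r_\alpha\mathcal{A}_\alpha|_{\pi^{-1}(U_{s_0})}
$$
exhibits $\mathcal{L}|_{\pi^{-1}(U_{s_0})}$ as a positive $\mathbb{R}$-linear combination of $\pi$-ample $\mathbb{Q}$-line bundles, hence ample over $U_{s_0}$ by the definition of a $\pi$-ample $\mathbb{R}$-line bundle. The main delicate point is openness of $V$: rather than trying to argue directly on the complete but possibly non-projective scheme $X_{s_0}$, the cleanest route is to observe that the ampleness hypothesis forces $X_{s_0}$ to be projective, after which the desired openness is standard.
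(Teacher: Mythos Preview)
Your proposal is correct and follows essentially the same approach as the paper's proof: write $\mathcal L$ in coordinates, use openness of the ample locus on the fiber to replace the real coefficients by nearby rationals, then apply the classical (line bundle / $\mathbb Q$-line bundle) spreading-out result and take a finite intersection of the resulting neighborhoods. The only notable difference is that you explicitly justify the openness of $V$ by observing that the ampleness hypothesis forces $X_{s_0}$ to be projective, whereas the paper simply asserts the openness; your extra sentence is a welcome clarification but does not change the argument.
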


Although Lemmas \ref{f-lem6.1} and \ref{f-lem6.2} are 
more or less known to the experts, we can not 
find them in the standard literature. 
Hence we prove them here for the sake of completeness.

\begin{proof}[Proof of Lemma \ref{f-lem6.2}]
Since $\mathcal L$ is an $\mathbb R$-line bundle, 
there exist line bundles $\mathcal M_j$ for 
$1\leq j\leq k$ such that 
$$
\mathcal L=\sum _{j=1}^k b_j \mathcal M_j
$$ 
in $\Pic(X)\otimes _{\mathbb Z} \mathbb R$, where 
$b_j\in \mathbb R$ for 
every $j$. 
We put 
$$
\mathcal A=\left\{(c_1, \ldots, c_k)\in \mathbb R^k \left| \ \sum _{j=1}^k c_j 
\mathcal M_j|_{X_{s_0}} \ \text{is ample} \right.\right\}. 
$$ 
Then $\mathcal A$ contains a small open neighborhood 
of $(b_1, \ldots, b_k)$. 
Hence we can write 
$$
\mathcal L=\sum _i a_i \mathcal L_i, 
$$ 
where $\mathcal L_i$ is a line bundle on $X$ 
such that $a_i$ is a positive real number 
and $\mathcal L_i|_{X_{s_0}}$ is ample for every $i$. 
Since $\mathcal L_i|_{X_{s_0}}$ is ample for every $i$, 
there exists a Zariski open neighborhood 
$U_{s_0}$ of $s_0$ such that 
$\mathcal L_i|_{\pi^{-1}(U_{s_0})}$ is ample over $U_{s_0}$ for every $i$ 
(see, for example, \cite[Proposition 1.41]{kollar-mori}). 
Therefore, $\mathcal L|_{\pi^{-1}(U_{s_0})}=\sum _ia_i 
\mathcal L_i|_{\pi^{-1}(U_{s_0})}$ is ample 
over $U_{s_0}$. 
\end{proof}

Let us prove Lemma \ref{f-lem6.1}. 

\begin{proof}[Proof of Lemma \ref{f-lem6.1}]
We use the same notation as in the proof of 
Lemma \ref{f-lem6.2}. 
By Lemma \ref{f-lem6.2}, we can take 
$s_1, \ldots, s_l \in S$ such that 
$$
\bigcup _{\alpha=1}^l U_{s_\alpha}=S
$$ 
and 
that $\mathcal L|_{\pi^{-1}(U_{s_{\alpha}})}$ is ample 
over $U_{s_\alpha}$ for every $\alpha$. 
We put 
$$
\mathcal A_\alpha=\left\{(c_1, \ldots, c_k)\in \mathbb R^k\left| \ \sum _{j=1}^k c_j 
\mathcal M_j|_{\pi^{-1}(U_{s_{\alpha}})} \ \text{is $\pi$-ample 
over $U_{s_\alpha}$} \right.\right\}. 
$$ 
Then $\mathcal A_{\alpha}$ contains 
a small open neighborhood of 
$(b_1, \ldots, b_k)$. 
Therefore, $\bigcap _{\alpha=1}^l \mathcal A_{\alpha}$ also contains 
a small open neighborhood of $(b_1, \ldots, b_k)$. 
Hence we can write 
$$\mathcal L=\sum _i a_i \mathcal L_i$$ 
in 
$\Pic(X)\otimes _{\mathbb Z}\mathbb R$ such that 
$\mathcal L_i$ is a $\pi$-ample line bundle 
on $X$ and $a_i$ is a positive real number for every $i$. 
\end{proof}

Finally, we prove Theorem \ref{f-thm1.5}. 

\begin{proof}[Proof of Theorem \ref{f-thm1.5}]
If $\mathcal L$ is $\pi$-ample, then 
it is obvious that it satisfies 
the desired property. 
Hence, by Lemma \ref{f-lem6.1}, 
it is sufficient to prove that $\mathcal L|_{X_s}$ is ample 
for every closed point $s\in S$, where 
$X_s=\pi^{-1}(s)$, 
under the assumption that $\mathcal L^{\dim Z}\cdot Z>0$. 
This follows from the Nakai--Moishezon ampleness criterion for 
$\mathbb R$-line bundles on complete schemes (see Theorem \ref{f-thm1.3}). 
\end{proof}

\section{Proof of Theorem \ref{f-thm1.6}}\label{f-sec7}

In this final section, we prove the Nakai--Moishezon 
ampleness criterion for $\mathbb R$-line bundles on 
complete algebraic spaces (Theorem \ref{f-thm1.6}) 
by using some basic properties of 
algebraic spaces and Theorem \ref{f-thm1.3}. 

\begin{proof}[Proof of Theorem \ref{f-thm1.6}] 
It is well known that the Nakai--Moishezon ampleness 
criterion holds for line bundles on complete 
algebraic spaces (see, for example, \cite[3.11.~Theorem]{kollar} 
and \cite[(1.4) Theorem]{p}). 
It is also well known that there exists a finite surjective morphism 
$f\colon Y\to X$ from a complete scheme $Y$ (see, 
for example, \cite[2.8.~Lemma]{kollar}). 
By Theorem \ref{f-thm1.3}, 
$f^*\mathcal L$ is an ample $\mathbb R$-line bundle 
on $Y$. 
We write 
$$
\mathcal L=\sum _j a_j \mathcal L_j, 
$$ 
where $\mathcal L_j$ is a line bundle on $X$ and $a_j$ is a real 
number for every $j$. 
We put 
$$
\mathcal M=\sum _j b_j \mathcal L_j, 
$$ 
where $b_j$ is a rational number for every $j$. 
If $|a_j-b_j|\ll 1$ for every $j$, 
then $f^*\mathcal M$ is an ample $\mathbb Q$-line bundle 
on $Y$ since $f^*\mathcal L$ is ample. 
Therefore, $m\mathcal M$ is an ample line bundle on $X$ for 
some positive integer $m$ by the 
Nakai--Moishezon ampleness criterion for line bundles 
on complete algebraic spaces. 
This implies that $X$ is projective. 
Thus, by Theorem \ref{f-thm1.3} again, $\mathcal L$ is 
an ample $\mathbb R$-line bundle on $X$. 
\end{proof}

%%%%%%%%%%%%%%%


\begin{thebibliography}{KoM} 

\bibitem[B]{birkar}
C.~Birkar, The augmented base locus of real 
divisors over arbitrary fields, 
Math. Ann. \textbf{368} (2017), no. 3-4, 905--921. 

\bibitem[CP]{campana-peternell} 
F.~Campana, T.~Peternell, 
Algebraicity of the ample cone of projective 
varieties, 
J. Reine Angew. Math. \textbf{407} (1990), 160--166.

\bibitem[F1]{fujino1} 
O.~Fujino, On the Kleiman--Mori cone, 
Proc. Japan 
Acad. Ser. A Math. Sci. \textbf{81} (2005), no. 5, 80--84.

\bibitem[F2]{fujino2}
O.~Fujino, {\em{Foundations of the minimal model program}}, 
MSJ Memoirs, \textbf{35}. Mathematical Society of Japan, Tokyo, 2017. 

\bibitem[F3]{fujino-moduli} 
O.~Fujino, Semipositivity theorems for moduli problems, 
Ann. of Math. (2) \textbf{187} (2018), no. 3, 639--665. 

\bibitem[F4]{fujino3} 
O.~Fujino, 
Minimal model theory for log surfaces in Fujiki's class $\mathcal C$, 
to appear in Nagoya Math. J. 

\bibitem[FP]{fujino-payne}
O.~Fujino, S.~Payne, 
Smooth complete toric threefolds with no nontrivial nef line bundles, 
Proc. Japan Acad. Ser. A Math. Sci. \textbf{81} (2005), no. 10, 174--179. 

\bibitem[Kl]{kleiman} 
S.~L.~Kleiman, 
Toward a numerical theory of ampleness, 
Ann. of Math. (2) \textbf{84} (1966), 293--344.

\bibitem[Ko]{kollar}
J.~Koll\'ar, 
Projectivity of complete moduli, J. Differential Geom. \textbf{32} 
(1990), no. 1, 235--268.

\bibitem[KoM]{kollar-mori} 
J.~Koll\'ar, S.~Mori, {\em{Birational geometry of algebraic 
varieties}}. With the collaboration of C.~H.~Clemens 
and A.~Corti. Translated 
from the 1998 Japanese original. Cambridge 
Tracts in Mathematics, \textbf{134}. Cambridge 
University Press, Cambridge, 1998.

\bibitem[La]{lazarsfeld} 
R.~Lazarsfeld, {\em{Positivity in algebraic geometry. I. Classical setting: line 
bundles and linear series}}, 
Ergebnisse der Mathematik und ihrer Grenzgebiete. 3. Folge. A 
Series of Modern Surveys in Mathematics [Results in 
Mathematics and Related Areas. 3rd Series. A Series of 
Modern Surveys in Mathematics], \textbf{48}. Springer-Verlag, Berlin, 2004.

\bibitem[L\"u]{lutkebohmert}
W. L\"utkebohmert, On compactification of schemes, 
Manuscripta Math. \textbf{80} (1993), no. 1, 95--111. 

\bibitem[P]{p}
P.~Pascual Gainza, 
Ampleness criteria for algebraic spaces, 
Arch. Math. (Basel) \textbf{45} (1985), no. 3, 270--274.

\bibitem[RG]{raynaud} 
M.~Raynaud, L.~Gruson, Crit\`eres de 
platitude et de projectivit\`e.~Techniques de \lq\lq platification\rq\rq d'un 
module, 
Invent. Math. \textbf{13} (1971), 1--89.
\end{thebibliography}
\end{document}